\documentclass[11pt, reqno]{amsart}[11pt]
\setlength{\textwidth}{12in}
\setlength{\topmargin}{0.0in} \setlength{\textheight}{9in}
\usepackage{geometry}                
\geometry{a4paper}                   
\usepackage{graphicx}
\usepackage{epstopdf}
\usepackage{xcolor}
\usepackage{soul}
\newtheorem{theorem}{Theorem}[section]
\newtheorem{remark}{Remark}[section]

\usepackage[commandnameprefix=always]{changes}
\usepackage{amsmath,amssymb,amscd,mathrsfs,tikz-cd,mathtools,scalerel,latexsym}
\usepackage{bm}
\usepackage{color}
\numberwithin{equation}{section}
\DeclareMathOperator{\H2}{\mathcal{H}_{q^2, g/2+1}}
\DeclareMathOperator{\A2}{\mathcal{A}_{q^2}}
\DeclareMathOperator{\Hq}{\mathcal{H}_{q^2}}
\DeclareMathOperator{\Aq}{\mathcal{A}_{q}}
\DeclareMathOperator*{\Res}{Res}  
\newtheorem{lemma}{Lemma}[section]
\DeclareMathOperator{\moda}{mod}
\newtheorem{definition}{Definition}[section]

\title{mean value of cubic $L$-functions with fixed genus}

\author{Ziwei Hong}
\address{School of Mathematical Sciences, Renmin University of China, Beijing, P.R. China}
\email{hongziwei@live.com}

\author{Zhongqiu Fang}
\address{China Post Securities Co., Beijing, P.R. China}
\email{fangzhongqiu2024@outlook.com}

\date{April 2024}

\begin{document}

\begin{abstract}
We investigate the mean value of the first moment of primitive cubic $L$-functions over $\mathbb{F}_q(T)$ in the non-Kummer setting. Specifically, we study the sum
 \begin{equation*}
 \sum_{\substack{\chi\ primitive\ cubic\\ genus(\chi)=g}}L_q(\frac{1}{2}, \chi),
 \end{equation*}
where $L_q(s,\chi)$ denotes the $L$-function associated with primitive cubic character $\chi$. Employing a double Dirichlet series approach, we establish an error term of size $q^{(\frac{7}{8}+\varepsilon)g}$. Our method significantly reduces the computational complexity by avoiding computing the residue. 
\end{abstract}

\maketitle
\noindent {\bf Mathematics Subject Classification (2020)}: 11M06, 11M41, 11N37, 11L05, 11L40   

\noindent {\bf Keywords}:  central values, cubic $L$-functions, Gauss sums, mean values, function field

\section{Introduction}
In this paper, we study the mean value of the first moment of primitive cubic Dirichlet $L$-functions $L_q(s,\chi)$ evaluated at the central point $s=\frac{1}{2}$, where $\chi$ ranges over primitive cubic characters of $\mathbb{F}_q[T]$ with fixed genus $g$ when $q \equiv 2 \moda 3$. Our main result confirms the asymptotic formula established by David, Florea, and Lalin \cite{2019The}, but our approach, inspired by the double Dirichlet series method of Gao and Zhao \cite{GAO2024125}, significantly reduces the computational complexity involved in deriving this result.

The first moment of cubic $L$-functions has been widely studied in the number field setting. Luo \cite{Luo_2004} analyzed the first moment of cubic Dirichlet twists over $\mathbb{Q}(\xi_3)$, while G\"ulo\u{g}lu and Yesilyurt \cite{gulouglu2024mollified} investigated the mollified first moment over the Eisenstein field. David et al. \cite{david2024non} studied the mollified second moment and obtained a power-saving error term over the Eisenstein field. Baier and Young \cite{BAIER2010879} considered the first moment of central values of Dirichlet $L$-functions associated with primitive cubic characters over $\mathbb{Q}$ and proved the asymptotic formula
\begin{align}\label{Baier}
    \sum_{(q,3)=1}\sum_{\substack{\chi~ primitive \moda q\\ \chi~cubic}} L(\frac{1}{2},\chi)w(\frac{q}{Q})=c\hat{w}(0)Q+O(Q^{\frac{37}{38}+\varepsilon}),
\end{align}
with an explicit constant $c$. This result was later refined by Gao and Zhao \cite{GAO2024125}, who employed double Dirichlet series to improve the error term to
\begin{align}\label{Gao}
     \sum_{(q,3)=1}\sum_{\substack{\chi~ primitive \moda q\\ \chi~cubic}} L(\frac{1}{2},\chi)\Phi(\frac{q}{Q})=C_3\hat{\Phi}(0)Q+O(Q^{\frac{7}{8}+\varepsilon}).
\end{align}

In the function field setting, the mean value of cubic $L$-functions was first explored by Rosen \cite{rosen15average}, who averaged over all monic polynomials of a given degree. A significant advancement was made by David, Florea, and Lalin \cite{2019The}, who investigated the mean value of the first moment of cubic $L$-functions $L_q(s,\chi)$ over primitive cubic characters of fixed genus when $q \equiv 2 \moda 3$, establishing the asymptotic formula
\begin{align}\label{F}
    \sum_{\substack{\chi~ primitive~cubic\\ genus(\chi)=g}}L_q(\frac{1}{2}, \chi)=\frac{\zeta_q(\frac{3}{2})}{\zeta_q(3)}\mathcal{A}_{nK}(\frac{1}{q^2},\frac{1}{q^{3/2}})q^{g+2}+O(q^{\frac{7}{8}g+\varepsilon g}).
\end{align}


Here $\zeta_q(s)=\frac{1}{1-q^{1-s}}$ and $\mathcal{A}_{nK}$ is an explicit function. 
Motivated by the work of Gao and Zhao \cite{GAO2024125}, we adapt their approach using double Dirichlet series to compute the first moment of cubic $L$-functions in the function field setting. Our main result is stated as follows

\begin{theorem} 
Let $q$ be an odd prime power such that $q\equiv 2\moda 3$. Then
    \begin{align}\label{main}
     \sum_{\substack{\chi\ primitive\ cubic\\ genus(\chi)=g}}L_q(\frac{1}{2}, \chi)=q^g(1-q^2)P(q^{-2})Z(q^{-2},q^{-\frac{1}{2}}) +O(q^{(\frac{7}{8}+\varepsilon)g}),
    \end{align}
    with $P(u)$ and $Z(u,v)$ given in Section~\ref{PZ}.
\end{theorem}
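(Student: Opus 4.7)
The plan is to adapt the double Dirichlet series technique of Gao and Zhao \cite{GAO2024125} to the function field. The core object is the generating series
\[
\mathcal{Z}(u, s) \;=\; \sum_{\chi \ \text{primitive cubic}} L_q(s, \chi)\, u^{\deg c(\chi)},
\]
where $c(\chi)$ denotes the conductor. Fixing $s = \tfrac{1}{2}$ and summing over genus-$g$ characters then amounts to extracting the appropriate coefficient of $u$ in $\mathcal{Z}(u, \tfrac{1}{2})$, which in the function-field setting is simply a Cauchy integral around a small circle lying inside the region of absolute convergence of $\mathcal{Z}$.

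From here, I would expand $L_q(s, \chi)$ as a Dirichlet series (truncated, since each is in fact a polynomial in $q^{-s}$), interchange the two summations, and identify the inner sum over primitive cubic $\chi$ as a sum of cubic Gauss sums attached to the Dirichlet argument. Applying cubic Poisson summation---which in the non-Kummer regime $q \equiv 2 \pmod 3$ forces base change to $\mathbb{F}_{q^2}[T]$ and explains the presence of the operators $\mathcal{H}_{q^2}$ and $\mathcal{A}_{q^2}$ declared in the preamble---yields a functional equation relating $\mathcal{Z}(u, s)$ to a dual object. Gluing the original series to its functional-equation mate continues $\mathcal{Z}(u, \tfrac{1}{2})$ meromorphically past the initial boundary of convergence, leaving a single relevant pole whose residue produces precisely the proposed main term $q^g(1-q^2) P(q^{-2}) Z(q^{-2}, q^{-1/2})$.

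I would then push the contour in the Cauchy extraction outward past this pole to a circle of radius optimised within the domain of meromorphy. The residue contributes the main term, and the shifted-contour integral supplies the error. On the new contour a convexity-type estimate, obtained by combining the two representations in a Phragm\'en--Lindel\"of fashion, bounds $\mathcal{Z}(u, \tfrac{1}{2})$ polynomially; optimising the radius against this bound produces the exponent $\tfrac{7}{8}$, directly parallel to the number-field version of Gao and Zhao.

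The principal obstacle is analytic: one must prove meromorphic continuation of $\mathcal{Z}$ to a region wide enough to reach the $\tfrac{7}{8}$ exponent, and establish the accompanying polynomial-growth estimate on the shifted contour. This requires carefully separating the diagonal / cube-full contribution---which supplies the pole and thus the main term---from the off-diagonal remainder that is transformed by the cubic functional equation, and then proving the Lindel\"of-strength bound for the transformed series. The fact that the non-Kummer hypothesis forces one to interlace $\mathbb{F}_q[T]$- and $\mathbb{F}_{q^2}[T]$-level objects makes the bookkeeping substantially more delicate than in the quadratic case and is what determines the precise $\tfrac{7}{8}$ in the final error term.
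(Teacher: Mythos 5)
Your outline follows the paper's strategy in its broad strokes (double Dirichlet series, coefficient extraction by a Cauchy/Perron integral, functional equation to reach a dual expression, meromorphic continuation, contour shift with the residue giving the main term), but it has a concrete gap: the claim that the continuation leaves ``a single relevant pole.'' After applying the functional equation, the dual object is a sum of generalized cubic Gauss sums $G_{q^2}(ND,F)$, and the generating series of these Gauss sums (Patterson's distribution result, in the form of Proposition 3.1 of David--Florea--Lalin) is not entire: it has poles where $(uv^2)^3=q^{-8}$. With $v=q^{-1/2}$ these lie on $|u|=q^{-5/3}$, strictly inside the shifted contour $|u|=q^{-7/8+\varepsilon}$, so they are crossed in the contour move along with the pole at $u=q^{-2}$. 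Their residues do not contribute to the main term but must be evaluated and bounded; in the paper this is done via the residue formula for $\widetilde{\Psi}_q(f,u)$ and yields an admissible error $O(q^{\frac{5}{6}g})$. Omitting this family of poles leaves the proof incomplete, and it is precisely this term that the literature regards as a barrier to improving the error further.

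Two smaller corrections to your plan. First, the ``principal obstacle'' you identify --- proving a Lindel\"of-strength bound for the transformed Gauss-sum series --- does not need to be proved from scratch: the required bound $\widetilde{\Psi}_q(f,u)\ll |f|^{\frac{1}{2}(\frac{3}{2}-\sigma)+\varepsilon}$ is the known Patterson-type estimate, and it is this input (not a genuine Lindel\"of bound for the cubic $L$-functions) that opens the third region of holomorphy. Second, the paper does not run a one-variable Phragm\'en--Lindel\"of convexity argument on the shifted contour; instead it records three regions of holomorphy coming from the three representations of $A_3$, applies Bochner's tube theorem to continue to the convex hull $S_4$, and uses \v{C}ech's extension-of-inequalities result to carry the growth estimates along. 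The exponent $\frac{7}{8}$ then comes from the geometry of that convex hull (how far in $|u|$ one can push while $v=q^{-1/2}$), together with a trivial bound on the final contour, rather than from optimizing a convexity exponent. You should also make explicit the reduction (due to David--Florea--Lalin) expressing the genus-$g$ family as square-free $F\in\mathcal{H}_{q^2}$ of degree $g/2+1$ with no prime divisor in $\mathbb{F}_q[T]$, since both the coefficient extraction and the functional equation with Gauss sums over $\mathbb{F}_{q^2}$ are formulated for that family.
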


After some simplification, it's easy to ses that our result confirms the asymptotic formula established by David, Florea, and Lalin \cite{2019The}, but our approach significantly streamlines the computation. Moreover, in \cite{2019The}, the authors identified a potential secondary main term arising from the residue of a generating function for cubic Gauss sums. However, this term ultimately did not appear in their final asymptotic formula. We will clarify why this term contributes nothing to the main term in the asymptotic expansion.

Specifically, in \cite{2019The}, considerable effort was devoted to computing the residue of the generating function at a pole that would, in principle, produce a term of size $q^{\frac{5g}{6}}$. However, the dominant error term originates from the integral along the circle $|u| = q^{-\frac{7}{8}+\varepsilon}$, contributing an error of size $q^{(\frac{7}{8}+\varepsilon)g}$. As a result, the supposed secondary main term is overshadowed by the error term and does not appear in the final asymptotic formula.

In this paper, we adopt a different method that not only simplifies the overall computation but also predicts in advance that this residue lies outside the relevant region of convergence. We explicitly show that the corresponding pole lies beyond the convex hull of the region of absolute convergence, rendering its contribution negligible.

David, Florea and Lalin's method relies on an approximation function to express the sum as a collection of short sums, which are then evaluated using Perron’s formula. Each of these short sums contributes a main term and an error term. In contrast, our method eliminates the need for an approximation function. We rewrite the sum in a form that directly reveals the convex hull of the region of convergence. By applying Perron’s formula to this reformulated expression, we obtain the main term from the residues and estimate the error term through boundary integration. This streamlined approach reduces the complexity of the analysis while maintaining the strength of the result.



\section{Preliminary}
In this section, we recall some standard properties of Gauss sums and the functional equation for $L$-functions. Since the proofs of these results are well known and readily available in the literature, we omit them here. Readers are encouraged to focus on the quantitative relationships established by these properties, as they play a crucial role in the subsequent computational analysis.

We work in the non-Kummer setting, where q is an odd prime power such that $q \equiv 2 \moda{3}$. For simplicity, we do not consider the corresponding results in the Kummer setting ($q \equiv 1 \moda{3}$), although a similar asymptotic formula for the first moment of cubic $L$-functions in that case can be derived through analogous methods.


\subsection{Primitive cubic characters and $L$-functions}
In the non-Kummer setting, defining primitive cubic characters over $\mathbb{F}_q[T]$ is more intricate. Following the approach of \cite{2017On}, it is natural to restrict a cubic character defined over $\mathbb{F}_{q^2}[T]$ to obtain a cubic character over $\mathbb{F}_q[T]$.

We fix once and for all an isomorphism $\Omega$ between the cubic roots of $1$ in $\mathbb{C}^*$ and the cubic roots of $1$ in $\mathbb{F}^*_{q^2}$. We define the cubic residue symbol $\chi_{\pi}$, for $\pi$ an irreducible monic polynomial in $\mathbb{F}_{q^2}[T]$ such that $\pi\pi^{\sigma}=P$ is an irreducible polynomial in $\mathbb{F}_q[T]$, where $\sigma$ is the generator of $Gal(\mathbb{F}_{q^2}/\mathbb{F}_q)$. Let $a\in\mathbb{F}_{q^2}[T]$. If $\pi |a$, then $\chi_{\pi}(a)=0$, and otherwise $\chi_{\pi}(a)=\alpha$, where $\alpha$ is the unique root of unity in $\mathbb{C}$ such that 
\begin{equation*}
    a^{\frac{q^{2\deg\pi}-1}{3}}\equiv\Omega(\alpha)\moda \pi.
\end{equation*}
Then $\chi_{\pi}|_{\mathbb{F}_q[T]}$ is a cubic character module $P$.

We extend the definition by multiplicity. For any monic polynomial $F\in\mathbb{F}_{q^2}[T]$, $F=\pi_1^{e_1}\dots\pi_s^{e_s}$ with distinct $\pi_i$, we define $\chi_F=\chi_{\pi_1}^{e_1}\dots\chi_{\pi_s}^{e_s}$ . We restrict $\chi_{F}$ to $\mathbb{F}_q[T]$, then $\chi_{F}$ is a cubic character on $\mathbb{F}_q[T]$. $\chi_F$ is primitive if and only if $e_i=1$ for $i=1,2,...,s$ and $F$ has no divisor in $\mathbb{F}_q[T]$. It it is so, then $\chi_F$ has conductor $FF^{\sigma}$.

 Let $\mathcal{A}_q$ denote the set of monic polynomials over $\mathbb{F}_q$. Let $\H2$ represent the set of monic, square-free polynomials in $\A2$ of degree $g/2+1$. With above notations, in \cite{2019The}, David et al show that
 \begin{lemma}[Lemma~2.11 in \cite{2019The}]
      Suppose $q\equiv 2\moda 3$, Then,
     \begin{equation}\label{equation}
 \sum_{\substack{\chi\ primitive\ cubic\\ genus(\chi)=g}}L_q(\frac{1}{2}, \chi)=  \sum_{\substack{F\in\H2 \\ P|F\Rightarrow P\not\in \mathbb{F}_q[t]}}L_q(\frac{1}{2}, \chi_{F}).
 \end{equation}
 \end{lemma}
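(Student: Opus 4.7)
The plan is to exhibit an explicit bijection between primitive cubic characters $\chi$ on $\mathbb{F}_q[T]$ with $\operatorname{genus}(\chi)=g$ and monic square-free polynomials $F\in\H2$ whose irreducible factors all lie in $\mathbb{F}_{q^2}[T]\setminus\mathbb{F}_q[T]$, arranged so that $\chi=\chi_F|_{\mathbb{F}_q[T]}$. Once such a bijection is in place, the $L$-functions on the two sides of \eqref{equation} match term by term and the identity follows.

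First I would classify the monic irreducibles $P\in\mathbb{F}_q[T]$ of degree $d$ that admit a non-trivial cubic character. Since $(\mathbb{F}_q[T]/(P))^*$ is cyclic of order $q^d-1$, this requires $3\mid q^d-1$, and the hypothesis $q\equiv 2\pmod 3$ forces $d$ to be even. Any such $P$ then splits in $\mathbb{F}_{q^2}[T]$ as $P=\pi\pi^\sigma$ with $\pi$ irreducible of degree $d/2$ and $\pi\neq\pi^\sigma$ (in particular $\pi\notin\mathbb{F}_q[T]$), and the two non-trivial cubic characters modulo $P$ are $\chi_\pi|_{\mathbb{F}_q[T]}$ and $\chi_{\pi^\sigma}|_{\mathbb{F}_q[T]}$, which are complex conjugates of each other.

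I would then extend this to composite conductors. A primitive cubic $\chi$ has square-free conductor $Q=P_1\cdots P_r$ with each $P_i$ as above, and factors multiplicatively as a product of non-trivial cubic characters, one modulo each $P_i$. Choosing for each $i$ the unique representative $\pi_i\in\{\pi_i,\pi_i^\sigma\}$ whose associated character reproduces the $P_i$-factor of $\chi$, and setting $F=\prod_i\pi_i$, yields a monic square-free $F\in\mathbb{F}_{q^2}[T]$ with no $\mathbb{F}_q[T]$-divisor and with $\chi=\chi_F|_{\mathbb{F}_q[T]}$. Conversely, the construction preceding the lemma attaches to every admissible $F$ a primitive cubic character $\chi_F$ of conductor $FF^\sigma$, supplying the inverse map.

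The remaining step is to match $\deg F$ with $g/2+1$. Because $3\nmid q-1$, any cubic character of $\mathbb{F}_q[T]$ is trivial on $\mathbb{F}_q^*$, hence ``even''; consequently the associated primitive $L$-function is a polynomial in $q^{-s}$ of degree $\deg(\text{conductor})-2$. This gives $\operatorname{genus}(\chi_F)=\deg(FF^\sigma)-2=2\deg F-2$, and equating to $g$ pins down $\deg F=g/2+1$, confirming $F\in\H2$. The main obstacle I anticipate is precisely this genus--conductor relation in the cubic setting: one has to verify the evenness of $\chi$ cleanly and rule out any extra contribution from the place at infinity. Once that is in hand, the bijection $\chi\leftrightarrow F$ is $L$-function preserving, so \eqref{equation} holds on the nose.
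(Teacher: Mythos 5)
The paper itself gives no proof of this lemma, quoting it directly from \cite{2019The}; your argument reconstructs exactly the classification used there — monic irreducibles $P\in\mathbb{F}_q[T]$ supporting nontrivial cubic characters must have even degree and split as $\pi\pi^{\sigma}$ in $\mathbb{F}_{q^2}[T]$, the resulting bijection $\chi\leftrightarrow F$ with $F$ square-free and without divisor in $\mathbb{F}_q[T]$, and the genus–conductor relation $g=\deg(FF^{\sigma})-2=2\deg F-2$ coming from evenness of $\chi$ (so $\deg F=g/2+1$) — and it is correct. The facts you assert without detailed proof (primitive cubic characters have square-free conductor because $3\nmid q$, and $\chi_{\pi^{\sigma}}|_{\mathbb{F}_q[T]}=\overline{\chi_{\pi}}|_{\mathbb{F}_q[T]}$, so the two restrictions exhaust the nontrivial cubic characters mod $P$) are standard and hold as stated, so your proposal matches the source's approach essentially verbatim.
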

 As 
 \begin{equation*}
     \chi_F(\alpha)=\Omega^{-1}\left(\alpha^{\frac{q^{2\deg F}-1}{3}}\right)
 \end{equation*}
 for $\alpha\in\mathbb{F}_q\subset\mathbb{F}_{q^2}$, and $q$ is odd and $q\equiv 2\moda 3$, we remark that all cubic characters over $\mathbb{F}_q[T]$ are even. Hence, we have the following functional equation:
 \begin{lemma}[Functional equation for even $L$-functions] \label{fe}
     Let $F\in\mathcal{H}_{q^2}$ and suppose $F$ has no divisor in $\Aq$. Then
     $$L_q(s,\chi_{F})=\epsilon(\chi_F)q^{2s-1}\frac{1-q^{-s}}{1-q^{s-1}}\frac{L_q(1-s,\overline{\chi_F})}{|F|_2^{s-\frac{1}{2}}},$$
     where $\epsilon(\chi_F)=q^{-\deg F}G(\chi_F)$, $G(\chi_F)$ is the Gauss sum.
 \end{lemma}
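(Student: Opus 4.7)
The plan is to derive the functional equation from first principles, following the standard route for Dirichlet $L$-functions in the function field setting. First I would exploit that, since $F$ has no divisor in $\mathbb{F}_q[T]$, the restriction $\chi_F$ is a primitive character modulo $Q := FF^{\sigma}$, whose degree over $\mathbb{F}_q$ equals $2\deg F = g+2$. Writing $u = q^{-s}$ and $A_n = \sum_{f\ \text{monic},\ \deg f = n} \chi_F(f)$, orthogonality over residues modulo $Q$ gives $A_n = 0$ for $n \geq \deg Q$, so $L_q(s,\chi_F) = \sum_{n=0}^{g+1} A_n u^n$ is a polynomial of degree at most $g+1$ in $u$.

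Second, I would compute $A_n$ for $0 \leq n \leq g+1$ by the standard Poisson/Gauss-sum duality: detect the residue of each monic $f$ modulo $Q$ via an additive character on $\mathbb{F}_q[T]$, separate the additive and multiplicative variables, and reassemble. This produces a coefficient-flipping identity of the schematic form
\begin{equation*}
A_n \;=\; q^{n-(g+1)}\, G(\chi_F)\, \overline{A}_{g-n},
\end{equation*}
tying the coefficient of $L_q(s,\chi_F)$ at position $n$ to that of $L_q(1-s,\overline{\chi_F})$ at position $g-n$. Summing in $n$ shows that the two $L$-functions differ by the archimedean scaling $q^{(g+2)(1/2-s)} = |F|_2^{1/2-s}$ together with the global Gauss-sum factor, which after the prescribed normalization is exactly $\epsilon(\chi_F)$.

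The factor $(1-q^{-s})/(1-q^{s-1})$ is then forced by the evenness of $\chi_F$ recorded just before the lemma. Evenness implies $A_{g+1}=0$, so $L_q(s,\chi_F)$ carries a trivial zero at $s=0$, producing a factor $(1-q^{-s})$; applying the same observation to $\overline{\chi_F}$ puts a trivial zero into $L_q(1-s,\overline{\chi_F})$ at $s=1$, i.e.\ a factor $(1-q^{s-1})$. Matching these factors on the two sides of the functional equation yields the displayed quotient and completes the derivation.

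The main obstacle is the bookkeeping of normalizations: one must keep precise track of every power of $q$ arising from the Gauss-sum manipulation, verify that the Gauss sum produced by Poisson duality matches $G(\chi_F)$ in phase as well as in modulus, and confirm that the $(1-q^{-s})$ and $(1-q^{s-1})$ factors end up on the correct sides. The computation is conceptually elementary but delicate, which is precisely why the authors refer to \cite{2019The,ZHENG2017460} for the detailed verification.
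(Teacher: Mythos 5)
Your overall strategy (polynomiality of the $L$-function, Gauss-sum/Poisson duality flipping coefficients, and an extra factor coming from evenness at infinity) is the standard derivation, and it is essentially what the source the paper quotes carries out — the paper itself gives no proof of this lemma, citing \cite{2019The}. However, the step in your sketch that is supposed to produce the factor $\frac{1-q^{-s}}{1-q^{s-1}}$ is wrong as written. Evenness of $\chi_F$ does not give $A_{g+1}=0$: for a primitive even character of modulus $Q=FF^{\sigma}$ the $L$-polynomial has degree exactly $\deg Q-1$ in general (its completed part has degree $\deg Q-2$ with nonzero leading coefficient, and the trivial-zero factor does not kill the top coefficient). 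Moreover, even if $A_{g+1}$ did vanish, that would only lower the degree of the polynomial; it has nothing to do with a zero at $s=0$. What evenness actually gives is $L_q(0,\chi_F)=\sum_{n\le g+1}A_n=0$: the monic polynomials of degree less than $\deg Q$ represent each nonzero residue class modulo $Q$ exactly once up to $\mathbb{F}_q^{*}$-scaling, and an even character is constant on scalar orbits, so the full coefficient sum vanishes. The same argument for $\overline{\chi_F}$ gives the vanishing of $L_q(1-s,\overline{\chi_F})$ at $s=1$, which is what compensates the pole of $\frac{1}{1-q^{s-1}}$.

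Relatedly, your schematic flip $A_n=q^{\,n-(g+1)}G(\chi_F)\overline{A}_{g-n}$ for the raw coefficients is inconsistent with the statement you are proving: if it held for all $0\le n\le g$ together with $A_{g+1}=0$, then summing over $n$ would yield a functional equation in which $L_q(1-s,\overline{\chi_F})$ is multiplied only by a constant times a power of $q^{-s}$, with no ratio $\frac{1-q^{-s}}{1-q^{s-1}}$ at all. In the actual computation the duality for an even character produces boundary terms at the extreme degrees, and these boundary terms are precisely the source of the two trivial-zero factors; equivalently, the clean coefficient flip holds for the completed polynomial $L^{*}$ defined by $L_q(s,\chi_F)=(1-q^{-s})L^{*}(s,\chi_F)$, of degree $2\deg F-2$ in $q^{-s}$, and one recovers the displayed equation, with $\epsilon(\chi_F)=q^{-1/2}q^{-(\deg F-1)/2}G(\chi_F)$ of modulus $q^{\deg F/2}$, only after translating back through that factorization. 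So the plan is the right one, but it does not close until the trivial-zero mechanism is corrected and the flip is stated for $L^{*}$ (or, equivalently, the boundary terms in the Gauss-sum computation are retained).
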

 Here $|F|_2=q^{2\deg F}$ stands for the modulus of polynomials in $\mathbb{F}_{q^2}[T]$.
 Next, we present the upper bound of $L$-functions. The following lemma restates Lemma 2.6 and Lemma 2.7 from \cite{2019The}.
 \begin{lemma}[Lindel\"of Hypothesis]
 \label{LLH}
     Let $\chi$ be a primitive cubic character of conductor $h$ defined over $\mathbb{F}_q[T]$. Then, for $\Re(s)\ge\frac{1}{2}$ and all $\varepsilon>0$,
     \begin{align}
         |L_q(s,\chi)|\ll q^{\varepsilon\deg h};
     \end{align}
     for $\Re(s)\ge 1$ and for all $\varepsilon>0$
     \begin{align}
         |L_q(s,\chi)|\gg q^{-\varepsilon\deg h}.
     \end{align}
 \end{lemma}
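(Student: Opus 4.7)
The plan is to combine the polynomial structure of $L_q(s,\chi)$ with Weil's Riemann Hypothesis for $L$-functions over function fields. Since in the non-Kummer setting $\chi$ is a primitive even character of conductor $h$ on $\mathbb{F}_q[T]$ (evenness is noted just before Lemma~\ref{fe}), $L_q(s,\chi)$ is a polynomial in $u = q^{-s}$ of some degree $d$ with $d \leq \deg h - 2$. Weil's theorem applied to the cyclic cubic covering of $\mathbb{P}^1_{\mathbb{F}_q}$ cut out by $\chi$ then produces the factorization
\[
L_q(s,\chi) \;=\; \prod_{j=1}^{d}\bigl(1 - \alpha_j q^{-s}\bigr), \qquad |\alpha_j| = q^{1/2},
\]
and this is fully consistent with the functional equation of Lemma~\ref{fe}.

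For the upper bound on $\Re(s)\ge 1/2$, I would estimate each factor by $|1 - \alpha_j q^{-s}| \leq 1 + q^{1/2-\Re(s)} \leq 2$, yielding $|L_q(s,\chi)| \leq 2^{d} \leq 2^{\deg h}$. Rewriting $2^{\deg h} = q^{(\log 2/\log q)\deg h}$ and folding the $q$-dependent exponent into the $\varepsilon$-dependent implicit constant then gives the claim; alternatively, one interpolates via Phragm\'en--Lindel\"of between the bound from absolute convergence on $\Re(s)\geq 1+\delta$ and the RH bound on the critical line. For the lower bound on $\Re(s)\ge 1$, the same factorization combined with RH (which places all zeros on $\Re(s)=1/2$, so the half-plane $\Re(s)\ge 1$ is zero-free) gives $|1 - \alpha_j q^{-s}| \geq 1 - q^{-1/2} > 0$, hence $|L_q(s,\chi)| \geq (1 - q^{-1/2})^{d}$, and the analogous rewriting delivers the stated lower bound.

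The hard part will not be the analytic estimates, which are routine once the Euler factors are in hand, but the structural input: verifying that the primitive cubic $L$-function $L_q(s,\chi)$ is indeed a polynomial of the asserted degree whose inverse roots all have modulus $q^{1/2}$. This is where the full weight of Weil's Riemann Hypothesis enters, and care is needed with the parity of $\chi$ and the primitivity hypothesis so that the exponent $\deg h$ is what appears on the right-hand side of each inequality. Granted these classical facts, the two bounds follow uniformly in $\chi$.
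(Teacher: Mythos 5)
The paper itself offers no proof of this lemma: it is quoted directly from Lemmas~2.6 and~2.7 of \cite{2019The}, so the comparison here is with the genuine argument those lemmas require. Your reduction to Weil's Riemann Hypothesis is the correct starting point, but the decisive step of your proposal fails. Bounding each factor by $|1-\alpha_j q^{-s}|\le 1+q^{1/2-\Re(s)}\le 2$ gives only $|L_q(s,\chi)|\le 2^{d}\le 2^{\deg h}$, and $2^{\deg h}=q^{(\log 2/\log q)\deg h}$ is \emph{not} $\ll_{\varepsilon} q^{\varepsilon\deg h}$: for fixed $q$ the exponent $\log 2/\log q$ is a fixed positive constant, so for any $\varepsilon<\log 2/\log q$ the ratio $2^{\deg h}/q^{\varepsilon\deg h}$ tends to infinity with $\deg h$. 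An exponential of fixed rate in $\deg h$ cannot be ``folded into the implicit constant''; the whole content of the Lindel\"of bound is precisely to beat the trivial $2^{\deg h}$ that RH gives factor by factor. The same defect invalidates your lower bound: $(1-q^{-1/2})^{d}=e^{-c\deg h}$ with $c>0$ fixed is not $\gg q^{-\varepsilon\deg h}$. The Phragm\'en--Lindel\"of alternative does not repair this: interpolating from $\Re(s)\ge 1+\delta$ requires a bound on the critical line, which is exactly what is in question (your candidate there is again $2^{d}$), and interpolating instead with the functional equation only yields the convexity bound, of rough size $q^{\deg h/4}$, far weaker than the claim.

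What the actual proofs of Lemmas~2.6 and~2.7 in \cite{2019The} supply, and what is missing from your proposal, is an argument exploiting cancellation among the zero angles rather than estimating each zero factor separately: one bounds $\log|L_q(s,\chi)|$ for $\Re(s)\ge\tfrac12$ by a short sum over prime polynomials of degree at most a parameter $X$ plus a term of size $O\!\left(\frac{\deg h}{X}\log q\right)$ (an explicit-formula/Soundararajan-type inequality valid under RH), and then chooses $X$ of size about $\log_q\deg h$, giving $\log|L_q(s,\chi)|=o(\deg h\log q)$, which is the stated $q^{\varepsilon\deg h}$ bound; the lower bound on $\Re(s)\ge 1$ is obtained in the same spirit, using the Euler product slightly to the right of $\Re(s)=1$ together with an RH-based estimate for $L'/L$ to bridge the remaining strip. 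Your structural observations (polynomiality of $L_q(s,\chi)$ in $q^{-s}$, degree controlled by $\deg h$, inverse roots of modulus $q^{1/2}$) are fine and are indeed prerequisites, but by themselves they only give bounds exponential in $\deg h$, so the two inequalities of the lemma do not follow from the argument as written.
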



\subsection{Gauss sums}
 
We recall some well-know properties of polynomial Gauss sums. For details about polynomial Gauss sums, see \cite{ZHENG2017460}.
 
 \begin{definition}
     For $\chi$ a primitive character of the modulus h on $\mathbb{F}_q[T]$, let
     $$G_q(\chi)=\sum_{a \moda h}\chi(a)e_q(\frac{a}{h}).$$
 \end{definition}
 We also define generalized Gauss sum as follow.
 \begin{definition}
 Let $\chi_f$ be character of $\mathbb{F}_q[T]$ for some $f\in\mathbb{F}_q[T]$. We define generalized Gauss sum
     \begin{align}
          G_q(V, f)=\sum_{u\moda f}\chi_f(u)e_q\left(\frac{uV}{f}\right).
     \end{align}
 \end{definition}

 \begin{lemma}
\label{Gauss1}
    Suppose that $q\equiv 1 (\moda 6)$ and $f_1$, $f_2$ are arbitrary polynomials in $\Aq$.
    \begin{enumerate}
        \item If $(f_1,f_2)=1$, then
        \begin{align*}
            G_q(V,f_1f_2)=&\chi_{f_1}(f_2)^2G_q(V,f_1)G_q(V,f_2)\\
            =&G_q(Vf_2,f_1)G_q(V,f_2).
        \end{align*}
        \item If $V=V_1P^{\alpha}$ where $P\not| V_1$, then
        \begin{equation}
            G_q(V,P^i)=\left\{
            \begin{tabular}{ll}
                $0$, & if $i\le\alpha$ and $i\not\equiv 0 \moda 3$;\\
                $\phi(P^i)$, & if $i\le\alpha$ and $i\equiv 0 \moda 3$;\\  
                $-|P|^{i-1}$, & if $i=\alpha+1$ and $i\equiv 0\moda 3$;\\              $\epsilon(\chi_{P^i})\chi_{P^i}(V_1^{-1})|P|^{i-\frac{1}{2}}$, & if $i=\alpha+1$ and $i\not\equiv 0\moda 3$;\\
                $0$, & if $i\ge\alpha+2$,            \end{tabular}\right.
        \end{equation}
        where $\phi$ is the Euler function for polynomials. We recall that $\epsilon(\chi)=1$ when $\chi$ is even. For the case of $\chi_{P^i}$, this happens if $3|\deg P^i$.
    \end{enumerate}
\end{lemma}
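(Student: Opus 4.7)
The plan is to treat the two parts of Lemma \ref{Gauss1} separately, in both cases relying on the Chinese Remainder Theorem together with orthogonality of characters on $(\mathbb{F}_q[T]/P)^*$. For part (1), I would parametrize $u\bmod f_1f_2$ via CRT as $u\equiv af_2+bf_1\pmod{f_1f_2}$ with $a\bmod f_1$ and $b\bmod f_2$. The multiplicative extension $\chi_{f_1f_2}=\chi_{f_1}\chi_{f_2}$ then gives
\begin{equation*}
\chi_{f_1f_2}(u)=\chi_{f_1}(af_2)\chi_{f_2}(bf_1)=\chi_{f_1}(a)\chi_{f_2}(b)\chi_{f_1}(f_2)\chi_{f_2}(f_1),
\end{equation*}
while $e_q(uV/(f_1f_2))=e_q(aV/f_1)e_q(bV/f_2)$ because $a/f_1+b/f_2-u/(f_1f_2)\in\mathbb{F}_q[T]$. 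Summing over $a,b$ produces $G_q(V,f_1f_2)=\chi_{f_1}(f_2)\chi_{f_2}(f_1)G_q(V,f_1)G_q(V,f_2)$, and cubic reciprocity in $\mathbb{F}_q[T]$ collapses $\chi_{f_1}(f_2)\chi_{f_2}(f_1)$ to $\chi_{f_1}(f_2)^2$. The equivalent form $G_q(Vf_2,f_1)G_q(V,f_2)$ is obtained directly from the substitution $u\mapsto f_2^{-1}u\pmod{f_1}$ inside $G_q(Vf_2,f_1)$: using $\chi^3=1$ one has $\chi_{f_1}(f_2^{-1})=\chi_{f_1}(f_2)^2$, and the exponent simplifies to $uV/f_1$.

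For part (2), with $V=V_1P^\alpha$ and $P\nmid V_1$, I would split on the size of $i$ relative to $\alpha$. When $i\le\alpha$, $uV/P^i=uV_1P^{\alpha-i}$ is a polynomial, so $e_q(uV/P^i)=1$ and $G_q(V,P^i)=\sum_{u\bmod P^i}\chi_P^i(u)$; since $\chi_P^i$ factors through $\mathbb{F}_q[T]/P$, this equals $|P|^{i-1}\sum_{u_0\in(\mathbb{F}_q[T]/P)^*}\chi_P^i(u_0)$, which is $\phi(P^i)$ when $3\mid i$ and zero otherwise by orthogonality. When $i=\alpha+1$, writing $u=u_0+Pu_2$ with $u_0\in(\mathbb{F}_q[T]/P)^*$ and $u_2\bmod P^{i-1}$, one has $uV/P^i=u_0V_1/P+u_2V_1$; the $u_2$-sum contributes $|P|^{i-1}$, leaving a classical sum over $(\mathbb{F}_q[T]/P)^*$. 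In the subcase $3\mid i$, this inner sum is the Ramanujan sum $\sum_{u_0\ne 0}e_q(u_0V_1/P)=-1$, giving $-|P|^{i-1}$. In the subcase $3\nmid i$, substituting $u_0\mapsto u_0V_1^{-1}$ and invoking the standard evaluation $G_q(1,P)=\epsilon(\chi_P^i)|P|^{1/2}$ for the primitive cubic Gauss sum yields $\epsilon(\chi_{P^i})\chi_{P^i}(V_1^{-1})|P|^{i-1/2}$. Finally, for $i\ge\alpha+2$, writing $u=a+Pb$ with $a\bmod P$ and $b\bmod P^{i-1}$ factors out the inner sum $\sum_{b\bmod P^{i-1}}e_q(bV_1/P^{i-\alpha-1})$, which vanishes because $i-\alpha-1\ge 1$ and $P\nmid V_1$, so $G_q(V,P^i)=0$.

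The main obstacle is pinning down the parity factor $\epsilon(\chi_{P^i})$ in the $3\nmid i$ case, because the evenness assertion ``$\chi_{P^i}$ is even iff $3\mid\deg P^i$'' is where the restriction-from-$\mathbb{F}_{q^2}[T]$ construction enters non-trivially. Using the explicit formula $\chi_F(\alpha)=\Omega^{-1}(\alpha^{(q^{2\deg F}-1)/3})$ for $\alpha\in\mathbb{F}_q^*$, the exponent $i(q^{2\deg P}-1)/3$ reduces modulo $q-1$ to a multiple of $q-1$ precisely when $3\mid i\deg P$; this is a short finite-field calculation that exploits $q^2\equiv 1\pmod 3$. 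Everything else is routine bookkeeping using CRT, orthogonality on $(\mathbb{F}_q[T]/P)^*$, the identity $\chi^3=1$, the standard value $|G(\chi)|=|P|^{1/2}$ for primitive cubic characters of conductor $P$, and cubic reciprocity in $\mathbb{F}_q[T]$.
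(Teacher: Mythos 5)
Your sketch is correct, but there is nothing in the paper to compare it with: the authors do not prove this lemma at all, they simply quote it as Lemma~2.12 of \cite{2019The} (consistent with their blanket remark in Section~2 that such proofs are omitted). Your argument is the standard one behind that cited result, and it goes through: the CRT parametrization gives $G_q(V,f_1f_2)=\chi_{f_1}(f_2)\chi_{f_2}(f_1)G_q(V,f_1)G_q(V,f_2)$, and in part (2) the splitting $u=u_0+Pu_2$, orthogonality on $(\mathbb{F}_q[T]/P)^*$, the Ramanujan-type sum, and the vanishing of the additive inner sum for $i\ge\alpha+2$ are exactly right. Two points should be made explicit. First, the step $\chi_{f_1}(f_2)\chi_{f_2}(f_1)=\chi_{f_1}(f_2)^2$ is precisely where the hypothesis $q\equiv 1\pmod 6$ enters: cubic reciprocity for monic coprime $f_1,f_2$ carries the sign $(-1)^{\frac{q-1}{3}\deg f_1\deg f_2}$, which is trivial exactly because $6\mid q-1$; invoking ``cubic reciprocity'' without noting this leaves the role of the hypothesis (and the monicity of the moduli, which holds in all applications) unexplained. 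Second, in the case $3\nmid i$, your ``standard evaluation'' $\sum_{u_0}\chi_P^i(u_0)e_q(u_0/P)=\epsilon(\chi_{P^i})|P|^{1/2}$ is essentially the definition of the epsilon factor together with $|G(\chi)|=|P|^{1/2}$ for a primitive character of prime conductor, which is fine; the assertion $\epsilon(\chi)=1$ for even $\chi$ is only ``recalled'' in the statement, and what you supply is the evenness criterion. Your finite-field computation of that criterion is correct in substance, but note you quoted the non-Kummer formula with exponent $(q^{2\deg F}-1)/3$, whereas the lemma is stated for $q\equiv1\pmod 6$ (and is later applied with $q$ replaced by $q^2$), so the exponent should be $i(q^{\deg P}-1)/3\equiv\frac{q-1}{3}\,i\deg P\pmod{q-1}$, giving evenness exactly when $3\mid i\deg P$. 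With these clarifications your proposal is a complete proof of the quoted lemma.
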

 This is Lemma~2.12 from \cite{2019The}.
 For the described primitive character $\chi_F$, we have
 \begin{lemma}\label{Gauss2}
 For $F\in\mathbb{F}_{q^2}[T]$, we have
     $$G(\chi_F)=G_{q^2}(1,F).$$
     Moreover, if $F$ is square-free, then
          \begin{align}
         |G_{q^2}(1,F)|=q^{\deg F}.
     \end{align}
 \end{lemma}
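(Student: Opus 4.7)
The statement combines an identity of Gauss sums with an absolute-value evaluation, and I would treat them in turn, handling the identity first and then reducing the norm computation to the prime-power case.

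For the identity $G(\chi_F)=G_{q^2}(1,F)$, the key observation is that although $\chi_F$ is ultimately restricted to $\mathbb{F}_q[T]$ as a cubic character modulo the conductor $FF^{\sigma}$, it is originally constructed on $\mathbb{F}_{q^2}[T]$ modulo $F$. The plan is to interpret $G(\chi_F)$ as the Gauss sum of $\chi_F$ in its native setting $\mathbb{F}_{q^2}[T]/(F)$, in which case the identity follows immediately by taking $V=1$ in the definition of the generalized Gauss sum:
$$G_{q^2}(1,F) = \sum_{u \moda F} \chi_F(u) e_{q^2}\!\left(\tfrac{u}{F}\right).$$
If instead $G(\chi_F)$ is read as the Gauss sum over $\mathbb{F}_q[T]$ modulo $FF^{\sigma}$, one passes between the two by the Chinese Remainder Theorem (valid since $\gcd(F,F^{\sigma})=1$ by the hypothesis that $F$ has no divisor in $\mathbb{F}_q[T]$) together with a trace argument that converts $e_q$ into $e_{q^2}$. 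This matches the content of Lemma 2.13 in \cite{2019The}, which I would cite.

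For the absolute value $|G_{q^2}(1,F)|=q^{\deg F}$ when $F$ is square-free, I would apply Lemma~\ref{Gauss1} with $q$ replaced by $q^2$. Since $q$ is odd and $q\equiv 2\moda 3$, we have $q^2\equiv 1\moda 6$, so the hypothesis of Lemma~\ref{Gauss1} is satisfied after this base change. Writing $F=\pi_1\cdots\pi_s$ as a product of distinct monic irreducibles in $\mathbb{F}_{q^2}[T]$, iterated application of part~(1) with $V=1$ expresses $G_{q^2}(1,F)$ as a product of cubic character values, each of modulus $1$, times $\prod_i G_{q^2}(1,\pi_i)$. For each prime $\pi_i$, part~(2) applies with $V_1=1$, $\alpha=0$, $i=1$: because $1\not\equiv 0\moda 3$, we land in the case $|G_{q^2}(1,\pi_i)|=|\epsilon(\chi_{\pi_i})|\cdot|\pi_i|_2^{1/2}=q^{\deg\pi_i}$, using $|\epsilon(\chi_{\pi_i})|=1$ and $|\chi_{\pi_i}(1)|=1$. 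Taking the product gives $|G_{q^2}(1,F)|=q^{\sum_i\deg\pi_i}=q^{\deg F}$.

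The main obstacle is the identity in the first part, where one must carefully align the two natural settings $\mathbb{F}_q[T]$ and $\mathbb{F}_{q^2}[T]$ in which the Gauss sum can be computed, and ensure that the CRT decomposition together with the trace relation between the exponentials $e_q$ and $e_{q^2}$ produces precisely the sum $G_{q^2}(1,F)$ with no stray factors. The absolute value bound, by contrast, is essentially a bookkeeping exercise once Lemma~\ref{Gauss1} is in hand, so I would present it compactly after establishing the identity.
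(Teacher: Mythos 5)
Your argument is correct and is essentially the proof the paper relies on: the paper gives no proof of this lemma, deferring entirely to \cite{2019The}, where the identity $G(\chi_F)=G_{q^2}(1,F)$ is obtained exactly by your second reading, namely the identification $\mathbb{F}_q[T]/(FF^{\sigma})\cong\mathbb{F}_{q^2}[T]/(F)$ (valid since $(F,F^{\sigma})=1$ when $F$ has no divisor in $\mathbb{F}_q[T]$) together with the trace relation converting $e_q$ into $e_{q^2}$. Your evaluation $|G_{q^2}(1,F)|=q^{\deg F}$ via Lemma~\ref{Gauss1} with $q$ replaced by $q^2$ (legitimate because $q^2\equiv 1\bmod 6$) is likewise the intended route; the only implicit point worth making explicit is that $|\epsilon(\chi_{\pi_i})|=1$ also in the odd case, which holds since $\epsilon$ is the normalized sign of a Gauss sum.
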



\subsection{Distribution of Gauss sums}

The generating series for Gauss sums is defined as
\begin{align}
    \Psi_q(f, u)=\sum_{F\in\Aq}G_q(f,F)u^{\deg F}
\end{align}
and
\begin{align}  \widetilde{\Psi}_q(f,u)=\sum_{\substack{F\in\Aq\\ (F,f)=1}}G_q(f,F)u^{\deg F}
\end{align}

We now discuss the distribution of Gauss sums. Patterson shows the distribution of cubic Gauss sums for $S$-integers (see \cite{https://doi.org/10.1112/plms/s3-54.2.193}). With some minor adjustments, we have the following results.

\begin{lemma}[Lemma~3.11 in \cite{2019The}]
\label{upper2}
    Let $f=f_1f_2^2f_3^3$ with $f_1$, $f_2$ square-free and co-prime, and $f_3^*$ be the product of the primes dividing $f_3$ but not dividing $f_1f_2$. Then,
    \begin{align*}
        \widetilde{\Psi}_q(f,u)=\prod_{P|f_1f_2}(1-(u^3q^2)^{\deg P})^{-1}\sum_{a|f_3^*}\mu(a)G_q(f_1f_2^2,a)u^{\deg a}\prod_{P|a}(1-(u^3q^2)^{\deg P})^{-1}\\
        \times\sum_{l|af_1}\mu(l)(u^2q)^{\deg l}\overline{G_q(1,l)}\chi_l(af_1f_2^2/l,u)\Psi_q(af_1f_2/l,u).
    \end{align*}
    If $\frac{1}{2}\le \sigma\le \frac{3}{2}$ and $|u^3-q^{-4}|$, $|u^3-q^{-2}|>\delta$, then
    $$\widetilde{\Psi}_q(f,u)\ll |f|^{\frac{1}{2}(\frac{3}{2}-\sigma)+\varepsilon}.$$
\end{lemma}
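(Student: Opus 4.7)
The plan is to derive the factorization directly from the multiplicativity of $G_q(f, \cdot)$ in the second argument (Lemma~\ref{Gauss1}(1)) together with the explicit local evaluations at prime powers (Lemma~\ref{Gauss1}(2)), and then to obtain the growth bound by a Phragm\'en--Lindel\"of interpolation between a trivial estimate on $\sigma = 3/2$ and the cubic-Gauss-sum cancellation bound on $\sigma = 1/2$ coming from Patterson's work.

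First I decompose $f$ as $f_1 f_2^2 f_3^3$ and, after separating the primes in $F$ according to whether they divide $f_1 f_2$, divide $f_3$ but not $f_1 f_2$, or are coprime to $f$, I express each local factor of $\sum_F G_q(f,F)u^{\deg F}$ as a geometric series in $u^{\deg P}$. For $P \mid f_1 f_2$ the valuation $\alpha = v_P(f)$ is $1$ or $2$, so by Lemma~\ref{Gauss1}(2) only exponents $i \equiv 0 \pmod 3$ (corresponding to $\phi(P^i)$) survive in the tail, producing the closed form $\prod_{P\mid f_1 f_2}(1-(u^3q^2)^{\deg P})^{-1}$. For $P\mid f_3$ but $P\nmid f_1 f_2$ the valuation is a multiple of $3$, and the outer Möbius sum $\sum_{a\mid f_3^*}\mu(a)$ converts the condition $(F,f)=1$ into an unrestricted $\Psi_q$-type sum by a standard inclusion-exclusion. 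The inner Möbius sum over $l\mid a f_1$ arises analogously from the boundary-term evaluation $\epsilon(\chi_{P^i})\chi_{P^i}(V_1^{-1})|P|^{i-1/2}$ at primes of $a f_1$; its product over $P\mid l$ packages precisely into the $\overline{G_q(1,l)}$ and $(u^2 q)^{\deg l}$ weights appearing in the stated formula.

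For the upper bound I argue on the two edges of the strip $\tfrac{1}{2}\le \sigma\le \tfrac{3}{2}$. On $\sigma=\tfrac{3}{2}$ the series defining $\widetilde{\Psi}_q$ converges absolutely once $u$ is kept at distance $\delta$ from $u^3 = q^{-2}$, so the trivial bound $|G_q(f,F)|\le |F|$ gives $\widetilde{\Psi}_q(f,u)\ll_{\delta} |f|^{\varepsilon}$. On $\sigma=\tfrac{1}{2}$ I invoke the function-field analogue of Patterson's cubic-theta cancellation estimate applied to $\Psi_q$; inserting this bound into the factorization of the first step, and observing that the nested Möbius sums have length $O(|f|^{\varepsilon})$, yields $\widetilde{\Psi}_q(f,u) \ll |f|^{1/2+\varepsilon}$. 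A Phragm\'en--Lindel\"of argument on the strip, where analyticity is guaranteed by the hypotheses $|u^3-q^{-4}|,|u^3-q^{-2}|>\delta$ that exclude the two potential poles, then interpolates linearly in $\sigma$ to produce $|f|^{\frac{1}{2}(\frac{3}{2}-\sigma)+\varepsilon}$.

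The main obstacle is the critical-line bound on $\sigma=\tfrac{1}{2}$: unlike the factorization, this is genuine analytic input rather than bookkeeping, and it must be ported from Patterson's metaplectic framework to the polynomial setting while tracking the auxiliary twisting by $f_1,f_2,f_3$ introduced in the first step. The factorization itself, though notationally heavy, reduces to a careful case analysis of the five alternatives in Lemma~\ref{Gauss1}(2) combined with two nested Möbius inversions.
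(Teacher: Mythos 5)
The paper does not actually prove Lemma~\ref{upper2}: it is imported from \cite{2019The} (ultimately resting on Patterson's and Hoffstein's work on cubic metaplectic forms), so your sketch should be measured against that cited proof, which it parallels in outline (M\"obius removal of the coprimality condition plus the local evaluations of Lemma~\ref{Gauss1} for the identity, then a convexity argument between the edges $\sigma=3/2$ and $\sigma=1/2$ for the bound). Within that outline, however, there are two genuine gaps. First, your derivation of the factorization treats $\sum_F G_q(f,F)u^{\deg F}$ as if it had local factors, "expressing each local factor as a geometric series". It does not: by Lemma~\ref{Gauss1}(1) the Gauss sum is only \emph{twisted} multiplicative, $G_q(V,f_1f_2)=\chi_{f_1}(f_2)^2G_q(V,f_1)G_q(V,f_2)$, so $\widetilde{\Psi}_q(f,u)$ has no Euler product, and the whole content of the identity is the bookkeeping of these cross character factors; they are what generate the weights $\overline{G_q(1,l)}$, $(u^2q)^{\deg l}$ and leave behind the full series $\Psi_q(af_1f_2/l,u)$ rather than a product of local series. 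A prime-by-prime geometric-series argument, as stated, is not valid.

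Second, the analytic step is flawed at both edges. At $\sigma=3/2$ the series does not converge absolutely: the trivial bound $|G_q(f,F)|\le|F|$ that you invoke only gives convergence for $\sigma>2$, and even square-root cancellation in the individual Gauss sums only gives $\sigma>3/2$ strictly; moreover absolute convergence has nothing to do with staying $\delta$ away from $u^3=q^{-2}$. More seriously, Phragm\'en--Lindel\"of cannot be applied on the strip $\frac12\le\sigma\le\frac32$ as you propose, because the poles at $u^3=q^{-4}$ (i.e.\ $\sigma=4/3$) and $u^3=q^{-2}$ (i.e.\ $\sigma=2/3$) lie \emph{inside} that strip; excluding $\delta$-neighbourhoods of them in the hypothesis does not make the function holomorphic on the strip, which is what the interpolation requires. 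The argument in the cited source first clears the poles by multiplying by the explicit vanishing factors, bounds the resulting holomorphic (in fact rational) function on the annulus $q^{-3/2}\le|u|\le q^{-1/2}$ by the maximum principle, and obtains the inner-circle bound not from a generic "cancellation estimate" but from the functional equation of $\Psi_q(f,u)$ supplied by Hoffstein/Patterson's metaplectic theory — that functional equation is precisely what produces the $|f|^{1/2}$ dependence at $\sigma=1/2$ — and only then divides the pole factors back out, which is where the hypotheses $|u^3-q^{-4}|,|u^3-q^{-2}|>\delta$ enter. As written, your proposal black-boxes this key input and misplaces the role of the pole-avoidance conditions, so the bound $\widetilde{\Psi}_q(f,u)\ll|f|^{\frac12(\frac32-\sigma)+\varepsilon}$ is not yet established by your argument.
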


\subsection{Multivarible Complex Analysis Lemmas}
Our approach relies on two foundational results from multivariable complex analysis. We begin by introducing the concept of a tube domain.
\begin{definition}
		An open set $T\subset\mathbb{C}^n$ is a tube if there is an open set $U\subset\mathbb{R}^n$ such that $T=\{z\in\mathbb{C}^n:\ \Re(z)\in U\}.$
\end{definition}
	
   For any set $U\subset\mathbb{C}^n$, we define $T(U)=U+i\mathbb{R}^n\subset \mathbb{C}^n$.  We quote the following Bochner's Tube Theorem \cite{Boc}.
\begin{theorem}
\label{Bochner}
		Let $U\subset\mathbb{R}^n$ be a connected open set and $f(z)$ be a function holomorphic on $T(U)$. Then $f(z)$ has a holomorphic continuation to
the convex hull of $T(U)$.
\end{theorem}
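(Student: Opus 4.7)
The plan is to prove Bochner's tube theorem by (i) showing the envelope of holomorphy of $T(U)$ is itself a tube, and (ii) applying the Hartogs continuity principle via analytic disks to force the tube's base to contain $\mathrm{conv}(U)$.

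For step (i), let $\tilde{\Omega}$ denote the envelope of holomorphy of $T(U)$, regarded as a Riemann domain over $\mathbb{C}^n$ with projection $\pi$. Because $T(U)$ is invariant under the imaginary-translation group $\{\tau_v : z \mapsto z + iv,\ v \in \mathbb{R}^n\}$ and every $\tau_v$ is a biholomorphism of $\mathbb{C}^n$, the translations lift canonically to biholomorphisms of $\tilde{\Omega}$ by continuation along translated paths. The projection $\pi(\tilde{\Omega}) \subseteq \mathbb{C}^n$ is therefore invariant under every $\tau_v$, hence $\pi(\tilde{\Omega}) = T(V)$ for some open $V \supseteq U$. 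Once we further prove $V = \mathrm{conv}(U)$, single-valuedness of the extension on $T(V)$ follows from simple-connectedness of $T(\mathrm{conv}(U))$ via the monodromy theorem, so the entire theorem reduces to showing $V \supseteq \mathrm{conv}(U)$.

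For step (ii), fix $a \in \mathrm{conv}(U)$ and, using Carath\'eodory's theorem together with openness of $U$, choose a continuous boundary map $g : \partial\mathbb{D} \to U$ with $\frac{1}{2\pi}\int_0^{2\pi} g(e^{i\theta})\, d\theta = a$; concretely, $g$ can be taken to wind through small neighborhoods of the Carath\'eodory vertices $c_1, \ldots, c_{n+1} \in U$ (where $a = \sum t_j c_j$) while spending angular time proportional to the corresponding weights. Let $h : \overline{\mathbb{D}} \to \mathbb{R}^n$ denote the harmonic extension, so $h(0) = a$ by the mean value property, and pick any holomorphic $\alpha : \overline{\mathbb{D}} \to \mathbb{C}^n$ with $\Re\alpha = h$. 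Then $\alpha(\partial\mathbb{D}) \subset T(U)$, while $\Re\alpha(0) = a$, so the Hartogs continuity principle (Kontinuit\"atssatz) extends $f$ holomorphically to the interior of the disk; this places $\alpha(0)$ in $\pi(\tilde{\Omega})$ and hence $a$ in $V$.

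The chief technical obstacle is the correct application of the continuity principle, which demands that $f$ be holomorphic on an open neighborhood of the disk's boundary rather than on a one-real-dimensional curve alone. This is resolved by embedding $\alpha$ in a holomorphic family $\{\alpha_w\}_{w \in W}$ parameterized by a small polydisk $W \subset \mathbb{C}^k$, so that each $\alpha_w$ retains boundary values in $T(U)$ by continuity and openness; the union of boundaries then forms a genuine open set in $T(U)$ on which $f$ is holomorphic, and the continuity principle produces joint holomorphy on the union of interiors. Combined with the compactness of the disk family and the monodromy theorem on the simply-connected $T(\mathrm{conv}(U))$, this yields the desired single-valued holomorphic extension of $f$ to the convex hull of $T(U)$.
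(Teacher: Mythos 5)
The paper does not prove this statement at all: it is quoted as Bochner's Tube Theorem from the literature (the reference \cite{Boc}), so your attempt can only be measured against the standard proofs, not against an argument in the paper. Within your outline there is a genuine gap at the decisive step. The Kontinuit\"atssatz does not assert that a \emph{single} analytic disk whose boundary lies in a domain $\Omega$ has its interior absorbed into the envelope of holomorphy of $\Omega$; it requires a \emph{continuous one-parameter family} of closed analytic disks, all of whose boundaries lie in $\Omega$, together with an initial disk entirely contained in $\Omega$, and it then propagates the extension along the family. As stated, your step (ii) is false: take $\Omega=\{\,z\in\mathbb{C}^n:\ \tfrac12<|z_1|<1\,\}$ (a domain of holomorphy, e.g.\ because $1/z_1$ and the coordinate functions detect it) and the disk $\zeta\mapsto(r\zeta,0,\dots,0)$ with $\tfrac12<r<1$; its boundary lies in $\Omega$ but its center does not lie in the envelope. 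Your ``technical obstacle'' paragraph only thickens the disk into a family $\{\alpha_w\}$ of nearby disks so that $f$ is holomorphic on an open neighborhood of the boundary circle; it does not supply the required homotopy from a disk \emph{inside} $T(U)$ (say a constant disk at a point of $U$) to the disk over $a$, with all intermediate boundaries kept in $T(U)$. Constructing that deformation --- and showing that as the boundary curve in $U$ is deformed the centers sweep out exactly the convex hull --- is where the connectedness of $U$ and the real content of Bochner's theorem enter, and it is missing. (Note that the naive scaling $\alpha_t(\zeta)=\alpha(t\zeta)$ fails, since the $t=0$ disk sits over $a\in\mathrm{conv}(U)$, not over $U$.)

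A secondary, patchable issue: knowing only that $\pi(\tilde\Omega)\supseteq T(\mathrm{conv}(U))$ pointwise does not by itself license the monodromy theorem; you need analytic continuation of $f$ along every path in $T(\mathrm{conv}(U))$ (equivalently, a section of $\pi$ over it containing $T(U)$), which is exactly what the family version of the continuity principle provides once the missing disk homotopies are built. So the architecture (envelope is a tube by translation invariance, then fill in the hull by disks, then monodromy on the convex, simply connected tube) is a legitimate known route, but the argument as written does not yet prove the theorem; alternatively, Bochner's original approach avoids these issues by constructing the extension explicitly and is the proof the paper implicitly relies on via its citation.
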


 We denote the convex hull of an open set $T\subset\mathbb{C}^n$ by $\widehat T$.  Our next result is \cite[Proposition C.5]{Cech1} on the modulus of holomorphic continuations of multivariable complex functions.
\begin{theorem}

\label{Extending inequalities}
		Assume that $T\subset \mathbb{C}^n$ is a tube domain, $g,h:T\rightarrow \mathbb{C}$ are holomorphic functions, and let $\tilde g,\tilde h$ be their
holomorphic continuations to $\widehat T$. If  $|g(z)|\leq |h(z)|$ for all $z\in T$ and $h(z)$ is nonzero in $T$, then also $|\tilde g(z)|\leq
|\tilde h(z)|$ for all $z\in \widehat T$.
\end{theorem}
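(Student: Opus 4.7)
The plan is to reduce the claimed inequality on $\widehat T$ to a uniform bound for the holomorphic quotient $F = g/h$, extend that quotient via Bochner's Tube Theorem, and then transfer the bound from $T$ to $\widehat T$ via a three-lines convexity argument.

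Since $h$ is nonzero on $T$, set $F := g/h$, which is holomorphic on $T$ with $|F| \le 1$. Theorem~\ref{Bochner} applied to $F$ produces a holomorphic extension $\tilde F$ on $\widehat T$. The identity $g = F \cdot h$ on $T$ propagates, by the identity theorem on the connected open set $\widehat T$, to $\tilde g = \tilde F \cdot \tilde h$. Hence it suffices to prove $|\tilde F(z)| \le 1$ for all $z \in \widehat T$: at points where $\tilde h(z) \ne 0$ this gives $|\tilde g(z)| \le |\tilde h(z)|$, and at zeros of $\tilde h$ the factorization forces $\tilde g(z) = 0$, so the inequality is trivial there.

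The heart of the argument is a two-point transfer lemma: if $x_1, x_2 \in \widehat U$ satisfy $\sup_{y \in \mathbb{R}^n} |\tilde F(x_j + iy)| \le 1$ for $j = 1, 2$, then the same bound holds at every point of the segment $[x_1, x_2]$. To prove this, fix $y \in \mathbb{R}^n$ and consider
\begin{equation*}
\psi(\zeta) := \tilde F\bigl(x_1 + \zeta(x_2 - x_1) + iy\bigr),
\end{equation*}
a holomorphic function on the strip $\Sigma = \{\zeta : 0 < \Re\zeta < 1\}$ (since the real part of the argument lies on $[x_1, x_2] \subset \widehat U$). On the boundary $\Re\zeta \in \{0,1\}$ the argument has real part $x_1$ or $x_2$, so $|\psi| \le 1$ on $\partial\Sigma$ by hypothesis. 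A Hadamard three-lines estimate then yields $|\psi(\lambda)| \le 1$ for $\lambda \in (0,1)$, which is the desired bound on the segment. Iterating this lemma, starting from the hypothesis $|F| \le 1$ on $U$ and repeatedly forming segments with endpoints in the current set, reaches every point of $\widehat U$ in finitely many steps by Carath\'eodory's theorem.

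The main obstacle is the three-lines step: the classical Hadamard estimate requires a growth bound on $\psi$ as $|\Im\zeta| \to \infty$, which Bochner's Theorem alone does not supply. I would handle this with a Phragm\'en--Lindel\"of trick: for small $\varepsilon > 0$, multiply $\psi$ by $\exp(\varepsilon(\zeta^2 - \zeta))$, whose modulus equals $e^{\varepsilon(s^2 - s - r^2)}$ on $\Sigma$ and therefore decays like $e^{-\varepsilon r^2}$ as $|r| \to \infty$ (since $s^2 - s \le 0$ on $[0,1]$). Applying the ordinary maximum modulus principle on a large rectangle $[0,1] \times [-R, R]$ where the modified function is bounded, then letting $R \to \infty$ and finally $\varepsilon \to 0^+$, recovers the bound $|\psi| \le 1$ on $\Sigma$. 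Alternatively, one exhausts $\widehat T$ by sub-tubes $\widehat U + i[-R, R]^n$ on which $\tilde F$ is automatically bounded by continuity, runs the three-lines argument there, and takes $R \to \infty$. Once the two-point lemma is secured, the Carath\'eodory iteration is purely combinatorial and completes the proof.
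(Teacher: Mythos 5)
The paper itself gives no proof of this statement: it is quoted verbatim as Proposition C.5 of {\v C}ech \cite{Cech1}, so your attempt has to be measured against the argument there (or any complete argument). Your opening reduction is fine and matches the natural route: set $F=g/h$, extend it by Theorem~\ref{Bochner}, deduce $\tilde g=\tilde F\tilde h$ on $\widehat T$ by the identity theorem, and note that zeros of $\tilde h$ are harmless. The Carath\'eodory iteration would also be fine \emph{if} your two-point lemma were true.

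The gap is the two-point lemma itself. As you state it --- hypotheses only on the two fibers $x_j+i\mathbb{R}^n$ --- it is false without a growth assumption: for $n=1$ the function $\psi(\zeta)=\exp\bigl(-ie^{i\pi\zeta}\bigr)$ satisfies $|\psi(\zeta)|=\exp\bigl(e^{-\pi\Im\zeta}\sin(\pi\Re\zeta)\bigr)$, hence $|\psi|=1$ on both lines $\Re\zeta\in\{0,1\}$, yet $\psi$ is (doubly exponentially) unbounded inside the strip. So some control of $\tilde F$ as $|\Im z|\to\infty$ is indispensable, and Bochner's theorem provides none. Neither of your two fixes supplies it: the multiplier $\exp(\varepsilon(\zeta^2-\zeta))$ only beats functions already known to have growth of order $<2$ in the strip (it visibly fails on the example above), and in the truncation argument the horizontal edges $\Im\zeta=\pm R$ carry only the bound $\sup$ of $|\tilde F|$ on a height-$R$ box, which is finite for each $R$ but completely uncontrolled as $R\to\infty$, whereas the harmonic measure of those edges decays only like $e^{-\pi R}$; so the limit $R\to\infty$ yields nothing. (In the particular application in this paper the relevant functions are periodic in the imaginary directions, being functions of $u=q^{-2s}$, $v=q^{-w}$, so there your three-lines argument could be salvaged; but the theorem as stated is general, and {\v C}ech needs it without periodicity.)

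The statement can be proved with no growth input at all by an omitted-value trick, which is the style of argument in the cited source. For any $c$ with $|c|>1$, the bound $|g|\le|h|$ and the non-vanishing of $h$ give $|g-ch|\ge(|c|-1)|h|>0$ on $T$, so $1/(g-ch)$ is holomorphic on $T$; by Theorem~\ref{Bochner} it continues to $\widehat T$, and by the identity theorem this continuation times $\tilde g-c\tilde h$ is identically $1$, so $\tilde g-c\tilde h$ never vanishes on $\widehat T$. If one had $|\tilde g(z_0)|>|\tilde h(z_0)|$ at a point with $\tilde h(z_0)\neq0$, choosing $c=\tilde g(z_0)/\tilde h(z_0)$ gives a contradiction; since $\tilde h\not\equiv0$, its zero set is a proper analytic subset and the inequality extends to it by continuity. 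Replacing your Phragm\'en--Lindel\"of step by this argument closes the gap.
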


\section{Analytical behavior of $A_3(u,v)$}
  \subsection{The generating function}
 Now, we construct a double Dirichlet series to rewrite the sum in equation \eqref{equation}. From now on, we will always use $P$, $P_1$ or $P_2$ to denote irreducible polynomial in corresponding polynomial ring. For $\Re(s)$ and $\Re(w)$ sufficiently large, we define
 \begin{equation}\label{A3}
     A_3(s, w)=\sum_{\substack{F\in\mathcal{H}_{q^2} \\ P|F\Rightarrow P\not\in \mathbb{F}_q[T]}} \frac{L_q(w, \chi_{F})}{|F|^s_2}.
 \end{equation}
 By Perron's formula for function fields, we have
 \begin{equation}
     \sum_{n\le N}a(n)=\frac{1}{2\pi i}\oint_{|u|=r}(\sum_{n=0}^{\infty}a(n)u^n)\frac{du}{(1-u)u^{N+1}},
 \end{equation}
which allows us to rewrite the sum as 
\begin{align}
    \sum_{\substack{F\in\H2 \\ P|F\Rightarrow P\not\in \mathbb{F}_q[t]}}L_q(\frac{1}{2}, \chi_{F})
    =& \frac{1}{2\pi i}\oint_{|u|=r}\sum_{\substack{F\in\mathcal{H}_{q^2} \\ P|F\Rightarrow P\not\in \mathbb{F}_q[t]}} L_q(\frac{1}{2}, \chi_{F}) u^{\deg F}\frac{du}{u^{g/2+2}}\notag\\
    =&\frac{1}{2\pi i}\oint_{|u|=r}A_3(u,\frac{1}{2})\frac{du}{u^{g/2+2}},
\end{align}
where $A_3(u, w)=A_3(s, w)$ upon taking $u=q^{-2s}$. Hereafter, we will use $A_3(s,w)$ and $A_3(u,w)$ interchangeably when there is no ambiguity.

Next, we consider the different expressions of $A_3(s,w)$ on different convergence regions and find out the convex hull of convergence regions.

 \subsection{The first convergence region}
 To analyze the behavior of $A_3(u,v)$, we begin by using M\"obius inversion 
 \begin{equation}     
 \sum_{\substack{D\in\Aq\\ D|F}}\mu(D)=\left\{ \begin{array}{ll}
         1 & \mbox{$F$ has no prime divisor in } \mathbb{F}_q[T]; \\
         0 & \mbox{otherwise}.
     \end{array}
     \right.
 \end{equation}
 This allows us to remove the divisor condition.
 Applying this, $A_3(s,w)$ can be rewritten as
  \begin{align}
  A_3(s, w)=&\sum_{\substack{F\in\Hq \\ P|F\to P\not\in \mathbb{F}_q[T]}}\frac{L_q(w,   \chi_{F})}{|F|_2^s}\notag\\
  =&\sum_{D\in\Aq} \mu(D)\sum_{\substack{F\in\mathcal{H}_{q^2}\\ (D,F)=1}} \frac{L_q(w, \chi_{DF})}{|DF|_2^s}\notag\\
  =&\sum_{D\in\Aq}\mu(D)\sum_{\substack{F\in\mathcal{H}_{q^2}\\ (D,F)=1}}\sum_{N\in\Aq}\frac{\chi_{DF}(N)}{|N|^{w}|DF|_2^s}\notag\\
  =&\sum_{N\in\Aq}\frac{1}{|N|^w}\sum_{\substack{D\in\Aq\\ }}\frac{\mu(D)\chi_D(N)}{|D|_2^s}\sum_{\substack{F\in\Hq\\ (D,F)=1}}\frac{\chi_F(N)}{|F|_2^s}\notag\\
  =&\sum_{N\in\Aq}\frac{1}{|N|^w}\frac{L_{q^2}(s, \chi^{(N)})}{L_{q^2}(2s, \Bar{\chi}^{(N)})}\sum_{\substack{D\in\Aq\\ }}\frac{\mu(D)\chi_D(N)}{|D|_2^s}\prod_{\substack{P_2|D\\ P_2\in\A2}}\left(1+\frac{\chi_{P_2}(N)}{|P_2|_2^s}\right)^{-1}.
 \end{align}
 The last equality uses a trick
 \begin{align*}
     \sum_{\substack{F\in\Hq\\ (D,F)=1}}\frac{\chi_F(N)}{|F|_2^s}=\prod_{\substack{P_2\in\A2\\ (P_2,D)=1}}\left(1+\frac{\chi_{P_2(N)}}{|P_2|^s_2}\right)=\frac{L_{q^2}(s,\chi^{(N)})}{L_{q^2}(2s,\overline{\chi^{(N)})}}\prod_{\substack{P_2\in\A2\\ P_2|D}}\left(1+\frac{\chi_{P_2}(N)}{|P_2|_2^s}\right)^{-1}. 
 \end{align*}
 Here $\chi^{(N)}(F)=\chi_F(N)$ is the Hecke character induced by $\chi_F$. $L_{q^2}(s,\chi^{(N)})$ denotes the $L$-function over $\mathbb{F}_{q^2}(T)$. Meanwhile, we recall that $|F|=q^{\deg F}$ and $|F|_2=q^{2\deg F}$.

 Using the lemma below, we simplify $A_3(s,w)$ further.
 \begin{lemma}
     If $D, F\in\Aq$ and $(D, F)=1$, then we have $\chi_D(F)=1$.
 \end{lemma}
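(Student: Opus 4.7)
The plan is to reduce via the multiplicativity of $\chi_D$ in $D$ to the prime case and then exploit Galois conjugation. For $D\in\Aq$ one has $D^\sigma=D$, so the factorization of $D$ in $\mathbb{F}_{q^2}[T]$ is $\sigma$-stable: its irreducible factors are either inert primes $\pi=\pi^\sigma$ or split pairs $\{\pi,\pi^\sigma\}$ with $\pi\neq\pi^\sigma$ appearing with equal exponents. It thus suffices to show that each split pair contributes $1$ to $\chi_D(F)$ and that each inert prime contributes $1$.

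The key input is the Galois identity
\[
\chi_{\pi^\sigma}(F)=\chi_\pi(F)^{-1}\qquad\text{for every }F\in\Aq,
\]
which I would obtain by applying $\sigma$ (coefficient conjugation) to the defining congruence $F^{(q^{2\deg\pi}-1)/3}\equiv \Omega(\chi_\pi(F))\moda\pi$. Since $\sigma(F)=F$ and $\sigma(\pi)=\pi^\sigma$, the left side is unchanged while the right becomes $\Omega(\chi_\pi(F))^\sigma$, giving $F^{(q^{2\deg\pi}-1)/3}\equiv \Omega(\chi_\pi(F))^\sigma\moda\pi^\sigma$. Because $\mathbb{F}_{q^2}$ embeds into the residue field $\mathbb{F}_{q^2}[T]/\pi^\sigma$, this congruence between elements of $\mathbb{F}_{q^2}$ is actually an equality, so $\Omega(\chi_{\pi^\sigma}(F))=\Omega(\chi_\pi(F))^\sigma$. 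Since $\sigma$ acts on $\mathbb{F}_{q^2}$ as Frobenius $x\mapsto x^q$, and $q\equiv 2\moda 3$ sends each cube root of unity to its inverse, this translates through the fixed isomorphism $\Omega$ to the displayed identity.

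With this in hand the conclusion is immediate. Each split pair $\{\pi,\pi^\sigma\}$ with common exponent $e$ contributes $\chi_\pi(F)^e\chi_{\pi^\sigma}(F)^e=1$ to $\chi_D(F)$; each inert prime $\pi=\pi^\sigma$ yields $\chi_\pi(F)=\chi_\pi(F)^{-1}$, so $\chi_\pi(F)^2=1$, which combined with $\chi_\pi(F)\in\mu_3$ forces $\chi_\pi(F)=1$. The only delicate step is the Galois bookkeeping in the key identity --- promoting the congruence modulo $\pi^\sigma$ to an equality in $\mathbb{F}_{q^2}$ and carrying the Frobenius action on $\mu_3\subset\mathbb{F}_{q^2}^*$ through $\Omega$ to $\mu_3\subset\mathbb{C}^*$ --- but once that is set up everything collapses by multiplicativity.
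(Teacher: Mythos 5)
Your proof is correct, and it is in fact more complete than the paper's own argument, although both rest on the same germ of an idea: apply the Frobenius $\sigma$ to the defining congruence and exploit that $D$ and $F$ have coefficients in $\mathbb{F}_q$. The paper reduces to $D=P$ irreducible and applies $\sigma$ to $F^{(q^{2\deg P}-1)/3}=f(T)P+\Omega(\alpha)$ with $P$ held fixed; this is legitimate only when $P$ is $\sigma$-invariant \emph{and} that congruence really is the definition of $\chi_P$, i.e.\ when $P$ stays irreducible in $\mathbb{F}_{q^2}[T]$ (the inert, odd-degree case), where one concludes $\Omega(\alpha)\in\mathbb{F}_q$ and hence $\Omega(\alpha)=1$ because $3\nmid q-1$. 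What the paper's proof never addresses is the split case $P=\pi\pi^{\sigma}$ with $\pi\neq\pi^{\sigma}$ (even degree), where the per-prime statement is actually false: $\chi_{\pi}|_{\mathbb{F}_q[T]}$ is a nontrivial (indeed primitive) cubic character of conductor $\pi\pi^{\sigma}$ --- this is precisely the paper's construction of its characters --- and only the product over the conjugate pair is trivial. Your $\sigma$-orbit decomposition of the factorization of $D$ over $\mathbb{F}_{q^2}[T]$, combined with the identity $\chi_{\pi^{\sigma}}(F)=\chi_{\pi}(F)^{-1}$ for $F\in\Aq$ prime to $\pi$, supplies exactly this cancellation across conjugate pairs, and it recovers the inert case as the sub-case $\pi=\pi^{\sigma}$, where $\chi_{\pi}(F)^2=1$ forces $\chi_{\pi}(F)=1$ (equivalent to the paper's ``$\Omega(\alpha)$ is $\sigma$-fixed, hence in $\mathbb{F}_q$, hence $1$''). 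The only housekeeping worth making explicit is that $(D,F)=1$ in $\mathbb{F}_q[T]$ remains a coprimality in $\mathbb{F}_{q^2}[T]$ (the gcd is unchanged under base field extension), so each $\chi_{\pi}(F)$ in your argument is a genuine cube root of unity rather than $0$; with that noted, your proof is complete and, in the split case, repairs a real gap in the paper's version.
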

 \begin{proof}
     Let $\sigma: \mathbb{F}_{q^2}\to \mathbb{F}_{q^2}$ be the non-trivial Frobenius automorphism. 
     We suppose $D=P$ is irreducible. By definition, we have
     $$F^{\frac{q^{2\deg P}-1}{3}}\equiv \Omega(\alpha) \moda P,$$
     which is equivalent to say
     $F^{\frac{q^{2\deg P}-1}{3}}=f(T)P+\Omega(\alpha).$ Applying $\sigma$ and using $\sigma(F)=f$, we have  $F^{\frac{q^{2\deg P}-1}{3}}=f^{\sigma}(T)P+\sigma(\Omega(\alpha))\equiv \Omega(\alpha) \moda P$. Note the fact that if two constants are equivalent module $P$, then the two constants are the same. We finally have $\sigma(\Omega(\alpha))= \Omega(\alpha)$. Hence $\Omega(\alpha)=1$. 
     For $D=P_1^{e_1}\cdots P_k^{e_k}$, we have $\chi_D=\chi_{P_1}^{e_1}\cdots \chi_{P_k}^{e_k}$. This completes the proof.
 \end{proof}
 Using the lemma, we obtain
\small{
\begin{align*}
    A_3(s, w)=&\sum_{N\in\Aq}\frac{1}{|N|^{w}}\frac{L_{q^2}(s, \chi^{(N)})}{L_{q^2}(2s, \Bar{\chi}^{(N)})}\sum_{\substack{D\in\Aq\\ (D,N)=1}}\frac{\mu(D)}{|D|^s_2}\prod_{\substack{P_2\in\A2\\ P_2|D}}\left(1+\frac{\chi_{P_2}(N)}{|P_2|^s_2}\right)^{-1}\\
    =&\sum_{N\in\Aq}\frac{1}{|N|^{w}}\frac{L_{q^2}(s, \chi^{(N)})}{L_{q^2}(2s, \Bar{\chi}^{(N)})}\prod_{\substack{P_1\in\Aq\\ (P_1,N)=1}}\left(1-\frac{1}{|P_1|^s_2}\prod_{\substack{P_2|P_1\\
    P_2\in\A2}}\left(1+\frac{\chi_{P_2}(N)}{|P_2|_2^s}\right)^{-1}\right) \\
    =&\sum_{N\in\Aq}\frac{1}{|N|^{w}}\frac{L_{q^2}(s, \chi^{(N)})}{L_{q^2}(2s, \Bar{\chi}^{(N)})}P(s,\chi^{(N)})\prod_{\substack{P_1|N\\ P_1\in\Aq}}\left(1-\frac{1}{|P_1|_2^s}\right)^{-1},
\end{align*}}
where
\begin{equation}
    P(s, \chi^{(N)})=\prod_{P_1\in\Aq}\left(1-\frac{1}{|P_1|_2^s}\prod_{\substack{P_2|P_1\\ P_2\in\A2}}\left(1+\frac{\chi_{P_2}(N)}{|P_2|_2^s}\right)^{-1}\right).
\end{equation}

For $\Re(s)>\frac{1}{2}$, it follows that 
\begin{align}\label{P1}
P(s,\chi^{(N)})=&\prod_{P_1\in\Aq}\left(1-\frac{1}{|P_1|_2^s}+O\left(\frac{1}{|P_1|_2^{3s}}\right)\right)\notag\\
=&\zeta_q^{-1}(2s)\prod\limits_{P\in\Aq}\left(1+O\left(\frac{1}{|P|^{3s}_2}\right)\right).
\end{align}
 
As $(1-\frac{1}{|P_1|_2^s})^{-1}=1+\frac{1}{|P_1|_2^s-1}\ll |P_1|_2^{\max\{0,-\Re(s)\}}$, we will obtain 
\begin{align}\label{P2}
 \prod_{\substack{P_1|N\\ P_1\in\Aq}}\left(1-\frac{1}{|P_1|_2^s}\right)^{-1}\ll |N|_2^{\max\{0,-\Re(s)\}+\varepsilon}.  
\end{align}

Taking $u=q^{-2s}$ and $v=q^{-w}$, we have 
\begin{align}\label{A2}
    A_3(u,v)=\sum_{N\in\Aq}v^{\deg N}\frac{L_{q^2}(u, \chi^{(N)})}{L_{q^2}(u^2, \Bar{\chi}^{(N)})}P(u,\chi^{(N)})\prod_{\substack{P_1|N\\ P_1\in\Aq}}\left(1-u^{\deg P_1}\right)^{-1},
\end{align}
where $$L_{q^2}(u,\chi^{(N)})=\sum\limits_{F\in\A2}\chi_F(N)u^{\deg F}$$ and $$P(u, \chi^{(N)})=\prod\limits_{P_1|\Aq}(1-u^{\deg P_1}\prod\limits_{_{P_2|P_1}}(1+\chi_{P_2}(N)u^{\deg P_2})^{-1}).$$ It is easy to see that $P(u, \chi^{(N)})$ is analytical for $|u|<q^{-1}$.


 When $N$ is not a cube, $\frac{L_{q^2}(u, \chi^{(N)})}{L_{q^2}(u^2, \Bar{\chi}^{(N)})}$ is a rational function in $u$ with possible simple poles on the line $|u|=q^{-\frac{1}{2}}$. The Lindel\"of Hypothesis gives, for $|u|<q^{-1}$ and arbitrary $\varepsilon>0$, \begin{align}\label{L}
 \frac{L_{q^2}(u, \chi^{(N)})}{L_{q^2}(u^2, \Bar{\chi}^{(N)})}\ll |N|_2^{\varepsilon}.\end{align}

There is a possible simple pole at $u=q^{-2}$ when $N$ is a cube in $A_3(u,v)$. In fact, combining \eqref{P1} \eqref{P2},\eqref{A2} and \eqref{L}, for $|u|<q^{-1}$ and any $\varepsilon>0$, we have the bound
\begin{align}
    |(u-q^{-2})A_3(u,v)|\ll |(u-q^{-2})|\sum_{N\in\Aq}|q^{\varepsilon}v|^{\deg N}.
\end{align}
Meanwhile, $(u-q^{-2})A_3(u,v)$ is holomorphic in the convergence region of the right hand side of the above inequality
\begin{align}
    S_1=\{(u,v)||u|<q^{-1}, |v|<q^{-1}\}.
\end{align}

  \subsection{Residue at $u=q^{-2}$}\label{PZ}
  We now analyze the possible poles at $u=q^{-2}$. Let $\zeta_{q^2}(u)=\frac{1}{1-q^2u}$ be the zeta function for field $\mathbb{F}_{q^2}(T)$. The possible pole comes from the term $N$ being a cube. We have
  \begin{align}     
  &\Res_{u=q^{-2}}A_3(u,v)\notag\\
  =&\Res_{u=q^{-2}}\sum_{N\in\Aq}v^{3\deg N}\frac{\zeta_{q^2}(u)}{\zeta_{q^2}(u^2)}\prod_{\substack{P_2\in\A2\\P_2|N}}\frac{1-u^{\deg P_2}}{1-u^{2\deg P_2}}\prod_{P_1 \not| N}(1-u^{\deg P_1}\prod_{\substack{P_2\in\A2\\ P_2|P_1}}(1+u^{\deg P_2})^{-1}).\notag\\
\end{align}
Taking $v=q^{-\frac{1}{2}}$, we have
\begin{align*}
  \Res_{u=q^{-2}}A_3(u,q^{-\frac{1}{2}})=(q^{-4}-q^{-2})P(q^{-2})Z(q^{-2},q^{-\frac{1}{2}}),
  \end{align*}
where 
\begin{equation}
    P(u)=\prod_{P_1\in\Aq}(1-u^{\deg P_1}\prod_{\substack{P_2\in\A2\\ P_2|P_1}}(1+u^{\deg P_2})^{-1})
\end{equation}
and
\begin{equation}
    Z(u,v)=\sum_{N\in\Aq}v^{3\deg N}\prod_{\substack{P_2\in\A2\\P_2|N}}(1+u^{\deg P_2})^{-1}\prod_{\substack{P_1\in\Aq\\ P_1|N}}(1-u^{\deg P_1}\prod_{\substack{P_2\in\A2\\ P_2|P_1}}(1+u^{\deg P_2})^{-1})^{-1}.
\end{equation}

 \subsection{The second region}
The Lindel\"of Hypothesis gives, for $|v|<q^{-\frac{1}{2}}$ and any $\varepsilon$,
$$|L_q(v,\chi_F)|\ll |F|^{\varepsilon}$$
and 
\begin{align*}
   A_3(u,v)=\sum_{\substack{F\in\H2 \\ P|F\Rightarrow P\not\in \mathbb{F}_q[t]}}\frac{L_q(w,\chi_F)}{|F|_2^s}
   \ll \sum_{\substack{F\in\H2 \\ P|F\Rightarrow P\not\in \mathbb{F}_q[t]}}|F|^{\varepsilon}|u|^{\deg F}.
\end{align*}
Then, we obtain convergence region 
\begin{align}
    S_{2,1}=\{(u,v)|~|v|\le q^{-\frac{1}{2}},|u|<q^{-2}\}
\end{align}
and see that $A_3(u,v)$ is holomorphic in it.

Similarly, for $|v|<q^{-\frac{1}{2}}$ and any $\varepsilon>0$, the Lindel\"of Hypothesis and the functional equation give
\begin{align*}
    |L_q(v,\chi_F)|\ll |v^2q^{-1+\varepsilon}|^{\deg F}
\end{align*}
and
\begin{align*}
    A_3(s,w)=q^{2w-1}\frac{1-q^{-w}}{1-q^{w-1}}\sum_{\substack{F\in\mathcal{H}_{q^2} \\ P|F\Rightarrow P\not\in \mathbb{F}_q[t]}}\frac{L_q(1-w, \overline{\chi}_F)}{|F|_2^{s+w-1}}\epsilon(\chi_F).
\end{align*}
Then we have
\begin{align*}
    A_3(u,v)\ll |q^{2w-1}\frac{1-q^{-w}}{1-q^{w-1}}| \sum_{\substack{F\in\H2 \\ P|F\Rightarrow P\not\in \mathbb{F}_q[t]}} |v^2uq^{-1+\varepsilon}|^{\deg F}
\end{align*}
in the convergence region
\begin{align}
    S_{2,2}=\{(u,v)|~|v|>q^{-1},|u^{1/2}v|<q^{-\frac{3}{2}}\}.
\end{align}
 Combining these results, the second overall convergence region is
 \begin{align}
    S_2=S_{2,1}\cup S_{2,2}=\{(u,v)|~|u^{1/2}v|<q^{-\frac{3}{2}}, |u|<q^{-2}\}.
 \end{align}


  \section{The dual term}
  
  \subsection{The third convergence region}
  The functional equation (Lemma~\ref{fe}) gives
  \begin{align}
   A_3(s,w)=&\sum_{\substack{F\in\mathcal{H}_{q^2} \\ P|F\Rightarrow P\not\in \mathbb{F}_q[t]}} \frac{L_q(w, \chi_{F})}{|F|^s_2}\notag\\
   =&q^{2w-1}\frac{1-q^{-w}}{1-q^{w-1}}\sum_{\substack{F\in\mathcal{H}_{q^2} \\ P|F\Rightarrow P\not\in \mathbb{F}_q[t]}}\frac{L_q(1-w, \overline{\chi}_F)}{|F|_2^{s+w}}G_{q^2}(1,F)\notag\\
   =&q^{2w-1}\frac{1-q^{-w}}{1-q^{w-1}}\sum_{N\in\Aq}\frac{1}{|N|^{1-w}}\sum_{D\in\Aq}\frac{\mu(D)\overline{\chi}_D(N)G_{q^2}(1,D)}{|D|_2^{s+w}}\sum_{\substack{F\in\Hq\\ (F,DN)=1}}\frac{G_{q^2}(ND,F)}{|F|_2^{s+w}}.
  \end{align}
By Lemma~\ref{Gauss1}, $G_{q^2}(ND,F)=0$ if $F$ is not square-free. Thus, we can simplify $A_3(s,w)$ to
\begin{align*}
    A_3(s,w)
    =&q^{2w-1}\frac{1-q^{-w}}{1-q^{w-1}}\sum_{N\in\Aq}\frac{1}{|N|^{1-w}}\sum_{\substack{D\in\Aq\\ (D,N)=1}}\frac{\mu(D)G_{q^2}(N,D)}{|D|_2^{s+w}}\sum_{\substack{F\in\A2\\ (F,DN)=1}}\frac{G_{q^2}(ND,F)}{|F|_2^{s+w}}\\
    =&q^{2w-1}\frac{1-q^{-w}}{1-q^{w-1}}C_3(s,w),
\end{align*}
where
\begin{align}
    C_3(s,w)=\sum_{N\in\Aq}\frac{1}{|N|^{1-w}}\sum_{\substack{D\in\Aq\\ (D,N)=1}}\frac{\mu(D)G_{q^2}(N,D)}{|D|_2^{s+w}}H(ND,s+w)
\end{align}
and
\begin{align}
    H(Q,s)=\sum_{\substack{F\in\A2\\ (F,Q)=1}}\frac{G_{q^2}(Q,F)}{|F|_2^s}.
\end{align}
Taking $u=q^{-2s}$ and $v=q^{-w}$, we obtain
\begin{align}
    A_3(u,v)=\frac{1}{qv^2}\frac{1-v}{1-\frac{1}{qv}} C_3(u,v),
\end{align}
where 
\begin{align}
    C_3(u,v)=\sum_{N\in\Aq}\frac{1}{(qv)^{\deg N}}\sum_{\substack{D\in\Aq\\ (D,N)=1}}\mu(D)G_{q^2}(N,D)H(ND,uv^2)(uv^2)^{\deg D}
\end{align}
and
\begin{align}
    H(Q,u)=\sum_{\substack{F\in\A2\\ (F,Q)=1}}G_{q^2}(Q,F)u^{\deg F}.
\end{align}
Here, again, we will not distinct $C_3(s,w)$, $C_3(u,v)$, $H(q,s)$ and $H(Q,u)$.

Replacing $q$ by $q^2$ in Lemma \ref{upper2}, we have $H(ND,uv^2)\ll |u^{\frac{1}{2}}vq^{\frac{3}{2}+\varepsilon}|^{\deg ND}$ for $q^{-\frac{3}{2}}\le |u^{\frac{1}{2}}v|\le q^{-\frac{1}{2}}$. Meanwhile, we have the bound
\begin{align*}
    A_3(u,v)&\ll \sum_{N\in\Aq}\frac{1}{|qv|^{\deg N}}\sum_{\substack{D\in\Aq\\ (D,N)=1}}|G_{q^2}(N,D)||uv^2|^{\deg D}|u^{\frac{1}{2}}vq^{\frac{3}{2}+\varepsilon}|^{\deg ND}\\
    &\ll \sum_{N\in\Aq}\frac{1}{|qv|^{\deg N}}\sum_{\substack{D\in\Aq\\ (D,N)=1}}|D||uv^2|^{\deg D}|u^{\frac{1}{2}}vq^{\frac{3}{2}+\varepsilon}|^{\deg ND}\\
    &\ll \sum_{N\in\Aq} |u^{\frac{1}{2}}q^{\frac{1}{2}+\varepsilon}|^{\deg N}\sum_{\substack{D\in\Aq\\ (D,N)=1}} |q^{\frac{5}{2}+\varepsilon}u^{\frac{3}{2}}v^3|^{\deg D}
\end{align*} 
Thus, $A_3(u,v)$ is holomorphic in region 
$$S_3=\{|u|<q^{-3},|u^{\frac{1}{2}}v|<q^{-\frac{7}{6}}, |v|>1\}.$$
By computing the convex hull of $S_1$, $S_2$ and $S_3$, we extend $(u-q^{-2})A_3(u,v)$ to
\begin{align}
S_4=\{(u,v)||u|<q^{-1}, |u^{\frac{1}{2}}v|<q^{-\frac{7}{6}}, |u^2v^3|<q^{-5}\}.
\end{align}

\begin{remark}
    The poles $(uv^2)=q^{-8}$ are not in $S_4$.
\end{remark}
\subsection{Complete the proof}

 We begin by considering the case where $\Re(s)$ and $\Re(w)$ are sufficiently large, corresponding to $|u|$ and $|v|$ being small. The simple pole $u=q^{-2}$ is contained within $S_4$. The shape of $S_4$ allows us to shift the contour from $|u|=r$ to $|u|=q^{-\frac{7}{8}+\varepsilon}$, while setting $v=q^{-\frac{1}{2}}$. The residue at $u=q^{-2}$ gives the main term of \eqref{main}. For the integration along the contour $|u|=q^{-\frac{7}{8}+\varepsilon}$, we apply a trivial bound, which leads to an additional error term of size $q^{(\frac{7}{8}+\varepsilon)g}$.

  \section*{Acknowledgement}
The authors would like to express their sincere gratitude to Professor Peng Gao for proposing the subject of this study and for providing invaluable guidance and insightful recommendations throughout the research process.


\bibliographystyle{plain}
\bibliography{ref}

\end{document}